\documentclass{amsart}
\usepackage{amsmath, amssymb}
\usepackage{url, tikz-cd}
\usepackage[mathscr]{euscript}

\newenvironment{customthm}[1]
  {\innercustomthm}
  {\endinnercustomthm}
  
\newtheorem{theorem}{Theorem}[section]
\newtheorem{lemma}[theorem]{Lemma}
\newtheorem{corollary}[theorem]{Corollary}
\newtheorem*{corollary*}{Corollary}

\newtheorem{proposition}[theorem]{Proposition}

\theoremstyle{definition}
\newtheorem{example}[theorem]{Example}

\newtheorem{remark}[theorem]{Remark}
\newtheorem*{remark*}{Remark}
\newtheorem{definition}[theorem]{Definition}
\newtheorem*{definition*}{Definition}

\newtheorem{exam}{Example}


\def \U {\mathcal U}
\def \C {\mathcal C}
\def \PP {\mathbb P}

\def\PGL{\operatorname{PGL}}
\def\DCF{\operatorname{DCF}}

\def\ccm{\operatorname{CCM}}

\def\dcl{\operatorname{dcl}}
\def\acl{\operatorname{acl}}
\def\alg{\operatorname{alg}}

\def\tp{\operatorname{tp}}

\def\RM{\operatorname{RM}}
\def\aut{\operatorname{Aut}}
\def\Gal{\operatorname{Gal}}

\def\C{\mathcal C}

\def\nmdeg{\operatorname{nmdeg}}


\def\Ind#1#2{#1\setbox0=\hbox{$#1x$}\kern\wd0\hbox to 0pt{\hss$#1\mid$\hss}
\lower.9\ht0\hbox to 0pt{\hss$#1\smile$\hss}\kern\wd0}
\def\ind{\mathop{\mathpalette\Ind{}}}
\def\Notind#1#2{#1\setbox0=\hbox{$#1x$}\kern\wd0\hbox to 0pt{\mathchardef
\nn=12854\hss$#1\nn$\kern1.4\wd0\hss}\hbox to
0pt{\hss$#1\mid$\hss}\lower.9\ht0 \hbox to
0pt{\hss$#1\smile$\hss}\kern\wd0}
\def\nind{\mathop{\mathpalette\Notind{}}}

\newcommand{\m}{\mathbb }
\newcommand{\mc}{\mathcal }

\title{When any three solutions are independent}

\author{James Freitag}
\address{James Freitag\\
University of Illinois Chicago\\ 
Department of Mathematics, Statistics,
and Computer Science\\ 
851 S. Morgan Street\\
Chicago, IL, 60607-7045\\
USA}
\email{jfreitag@uic.edu}

\author{R\'emi Jaoui}
\address{R\'emi Jaoui\\
Albert-Ludwigs Universität Freiburg\\
Abteilung für Mathematische Logik, Mathematisches Institut\\ Ernst-Zermelo-Straße 1, D-79104 Freiburg\\ Germany.}
\email{remi.jaoui@math.uni-freiburg.de}

\author{Rahim Moosa}
\address{Rahim Moosa\\
University of Waterloo\\
Department of Pure Mathematics\\
200 University Avenue West\\
Waterloo, Ontario \  N2L 3G1\\
Canada}
\email{rmoosa@uwaterloo.ca}

\subjclass[2020]{03C45, 12H05, 11J81, 32J27}
\keywords{Geometric stability theory, differentially closed fields, algebraic differential equations, transcendence, $D$-varieties, compact K\"ahler manifolds}

\thanks{The first author was partially supported by NSF grant DMS-1700095 and NSF CAREER award 1945251.
The second author was partially supported by the ANR-DFG program GeoMod (Project number 2100310201).
The third author was partially supported by an NSERC Discovery Grant.
The first and third authors would also like to thank the hospitality of the Fields Institute in Toronto during the 2021 thematic programme on Trends in Pure and Applied Model Theory, where much of this work was carried out.
}

\date{\today}

\begin{document}

\begin{abstract}
Given an algebraic differential equation of order greater than one, it is shown that if there is any nontrivial algebraic relation amongst any number of distinct nonalgebraic solutions, along with their derivatives, then there is already such a relation between three solutions.
In the autonomous situation when the equation is over constant parameters the assumption that the order be greater than one can be dropped, and a nontrivial algebraic relation exists already between two solutions.
These theorems are deduced as an application of the following model-theoretic result: Suppose $p$ is a stationary nonalgebraic type in the theory of differentially closed fields of characteristic zero; if any three distinct realisations of $p$ are independent then $p$ is minimal.
If the type is over the constants then minimality (and complete disintegratedness) already follow from knowing that any two realisations are independent.
An algebro-geometric formulation in terms of $D$-varieties is given.
The same methods yield also an analogous statement about families of compact K\"ahler manifolds.
\end{abstract}

\maketitle

\setcounter{tocdepth}{1}
\tableofcontents

\section{Introduction}

\noindent
Understanding algebraic relations between solutions of a differential equation is, perhaps along with understanding solutions of a particular form, the central problem in algebraic differential equations.
Characterizing algebraic relations between solutions is an important output of various approaches to differential Galois theory~\cite{kolchin1968algebraic, singer1988algebraic, dreyfus2018hypertranscendence}, while work using the model theory of differential fields frequently applies techniques from stability theory to obtain conclusions in this vein~\cite{nagloo2014algebraic, nagloo2017algebraic, jaoui2019generic}.
A number of recent transcendence results for analytic functions (equivalently, problems of \emph{bi-algebraic geometry}~\cite{klingler2018bi}) have played an important role in diophantine geometry and have natural interpretations in terms of characterizing algebraic relations between solutions of differential equations and their derivatives~\cite{pila2016ax, freitag2017strong, casale2020ax}. 

To state our main theorem let us fix a differential field $(k,\delta)$ of characteristic zero, as well as an order $n$ algebraic differential equation
\begin{equation}
\label{ode}
P(y,\delta y,\dots,\delta^{(n)}y)=0,
\end{equation}
where $P\in k[X_0,\dots,X_n]$ is irreducible.
For each $m\geq 1$, consider the following condition:
\begin{itemize}
\item[$(C_m)$]
For any $m$ distinct solutions $a_1,\dots,a_m\notin k^{\alg}$ of~(\ref{ode}), the sequence
$$(\delta^{(i)}a_j:i=0,\dots,n-1,j=1,\dots,m)$$
is algebraically independent over $k$.
\end{itemize}
In~\cite{nmdeg}, as a byproduct of working on a related problem, the first and third authors showed that as long as $n>1$, one always has $(C_{n+2})\implies(C_m)$ for all $m$.
We can now do much better:

\begin{customthm}{A}
\label{A}
\begin{itemize}
\item[(a)]
If $n>1$ then $(C_3)\implies(C_m)$ for all $m$.
\item[(b)]
If $\delta$ is trivial on $k$ then $(C_2)\implies(C_m)$ for all $m$.
\end{itemize}
\end{customthm}

Algebraic differential equations over constant parameters, as is assumed in part~(b), often go by the name {\em autonomous}.

Part~(a) of the theorem is sharp in both natural ways:  there are (non-autonomous) equations in every order greater than~$1$ that satisfy $(C_2)$ but not $(C_3)$, and there are (non-autonomous) order $1$ equations that satisfy $(C_3)$ but not $(C_4)$.
See Examples~\ref{exa} and~\ref{exb} below, respectively.
However, in order~$1$ we do have $(C_4)\implies(C_m)$ for all $m$, as we will explain later.
Part~(b) is also sharp, for example $\delta y-1=0$ is an autonomous equation satisfying $(C_1)$ but not $(C_2)$.

Let us consider some specific examples that have been studied recently.

\begin{exam}
In~\cite{jaoui2019generic}, the second author studied the special case of autonomous equations of order two and degree at least 3 where the coefficients of $P$ form an algebraically independent set.
He showed that the equation is then {\em strongly minimal and geometrically trivial}, from which it follows that $(C_2)\implies(C_m)$.
\end{exam}

\begin{exam}
Consider a Painlev\'e equation from the families $P_{II}$ through $P_{V}$ with generic coefficients.
For instance, one might take $y''=2y^3+ty+\pi$ over the differential field $(\m C(t),\frac{d}{dt})$.
Nagloo and Pillay \cite{nagloo2017algebraic} show that $(C_m)$ holds of all $m$.
\end{exam}

\begin{exam}
Consider the following autonomous equation satisfied by the $j$-function 
$$S(y) + \frac{y^2 - 1968 y + 2654208}{2 y^2 (y - 1728)^2} (y')^2=0,$$ where $S$ is the Schwarzian derivative: 
$S(y)=\left( \frac{y''}{y'}\right) '-\frac{1}{2} \left(\frac{y''}{y'}\right)^2$.
It follows from results of \cite{freitag2017strong} that $(C_2)$ fails of this equation.
In fact, if $y_1,y_2$ are nonalgebraic solutions then $(y_1,y_1', y_1'',y_2,y_2',y_2'')$ are algebraically dependent if and only if a modular polynomial vanishes at $(y_1,y_2 )$. 
A generalisation of this equation and setting is considered in \cite{casale2020ax}, where the $j$-function is replaced by the uniformisation function of the quotient of the upper half plane by an arbitrary genus zero Fuchsian group $\Gamma$ of the first kind.
For such equations, it is shown that $(C_2)$ fails if and only if $(C_m)$ fails for some $m$, if and only if $\Gamma$ is a proper subgroup of its commensurator. 
This analysis of the algebraic relations amongst the solutions and their derivatives, in~\cite{casale2020ax}, is key to proving the Ax-Lindemann-Weierstrass theorem for uniformising functions of Fuchsian groups and answering an old open question of Painlev\'e (1895).
Ax-Lindemann-Weierstrass results for the uniformizing functions of Shimura varieties are an essential component of the resolution of a number of recent diophantine conjectures and can naturally be phrased in terms of algebraic relations between solutions of differential equations (see \cite{klingler2018bi} and the references therein).
\end{exam}

In fact, our proof of Theorem~A works more generally with finite-dimensional systems of algebraic differential equations in several variables.
In the autonomous case this is best expressed in terms of algebraic vector fields; namely algebraic varieties $V$ equipped with a polynomial section~$s$ to the tangent bundle.
To study the algebraic relations between the solutions of the corresponding system of equations amounts to looking at subvarieties of cartesian powers of $V$ that are invariant under the vector field.
(Here we put on $V^m$ the canonical vector field induced by $s$, obtained by identifying $T(V^m)$ with the $m$-fold fibred product of $TV$ with itself.)
We can formulate our main theorem, in the autonomous case, as follows:

\begin{customthm}{B}
\label{B}
Suppose $(V,s)$ is an algebraic vector field over a field $k$ of characteristic zero.
If $V^2$ admits no proper invariant subvarieties over $k$ projecting dominantly onto each co-ordinate, other than the diagonal, then the same holds of $V^m$ for all $m$, that is, the various diagonals in $V^m$ are the only invariant subvarieties over $k$ projecting dominantly onto each co-ordinate.
\end{customthm}

In the non-autonomous setting one works with algebraic ``$D$-varieties" in the sense of Buium~\cite{Buium1} rather than algebraic vector fields; where the tangent bundle is replaced by the {\em prolongation}, a natural torsor of the tangent bundle that takes into account the twisting induced by~$\delta$ on $k$.
See~$\S$\ref{sectdvar} below for a discussion.
We have an analogue of Theorem~\ref{B} in that setting, but, as one might expect from part~(a) of Theorem~\ref{A}, we have to assume that $\dim V>1$ and the assumption on invariant subvarieties has to be made about $V^3$ in order to conclude it for all $V^m$.
See Corollary~\ref{dvarcor} below for the general statement that includes both the autonomous and non-autonomous setting.

There is an apparent discrepancy between Theorems~\ref{A} and~\ref{B}.
The latter is really about relations among {\em generic} solutions to the differential equation -- this is the meaning of only considering invariant subvarieties that project {\em dominantly} on each co-ordinate -- while Theorem~\ref{A} seems to be stronger, making conclusions about all nonalgebraic solutions.
In fact, however, $(C_1)$ already rules out the possibility of lower order equations, so that all nonalgebraic solutions are generic.

These theorems are applications of a more fundamental theorem about the model theory of differentially closed fields ($\DCF_0$), which we now state.
We work model-theoretically, assuming familiarity with geometric stability theory as well as the particular case of $\DCF_0$.
We suggest~\cite{gst} as a general reference for the former, and~\cite{Marker96modeltheory} as an introduction to the latter.

The following combines the main clauses of Theorems~\ref{c3min} and~\ref{c2min} below:
\begin{customthm}{C}
\label{C}
Suppose $p$ is a stationary nonalgebraic type in $\DCF_0$.
\begin{itemize}
\item[(a)]
If any three distinct realisations of $p$ are independent then $p$ is minimal.
\item[(b)]
If $p$ is over the constants, and any two distinct realisations of $p$ are independent, then $p$ is minimal.
\end{itemize}
\end{customthm}
Once we have a minimal generic type we can apply the Zilber trichotomy in differentially closed fields and deduce Theorems~\ref{A} and~\ref{B} quite readily.
The proof of minimality itself goes via reducing to the case when $p$ is internal to the constants and with a {\em binding group} that acts $3$-transitively, in the case of part~(a).
As the binding group is an algebraic group action, a theorem of Knop~\cite{knop1983mehrfach} implying that the only $3$-transitive algebraic group action is that of $\PGL_2$ on $\mathbb P$ now applies, and settles the matter.
For the autonomous case, namely part~(b), we do not require Knop's classification, but make use of a little differential Galois theory implying that the binding group cannot be centerless.

Other than standard facts from geometric stability theory and the model theory of differentially closed fields, as well as the theorem of Knop just mentioned, the paper is self-contained.
In particular, the results of~\cite{nmdeg} are not used here, though the approach taken there does inform what we do.

The proof of Theorem~\ref{C}(a) works in any totally transcendental theory where one has the following strong form of the Zilber dichotomy: every non locally modular minimal type over $A$ is nonorthogonal to an $A$-definable pure algebraically closed field.
So, for example, we could just as easily have worked with several commuting derivations in partial differentially closed fields ($\DCF_{0,m}$).
Or in the theory $\ccm$ of compact complex manifolds.
Indeed, concerning the latter, a short final section is dedicated to articulating the result as a theorem in bimeromorphic geometry that may be of independent interest, see Theorem~\ref{ccm3min} below.

\medskip
{\bf A note on authorship.}
The first and third authors initially circulated a version of this paper that did not consider the autonomous case separately.
So, for example, it did not include part~(b) of Theorems~\ref{A} and~\ref{C}.
The second author, upon reading the preprint, realised that the method would, with the addition of some differential Galois theory, yield stronger results in the autonomous case.
The current paper incorporates his improvements.

\medskip
Throughout, except when otherwise specified, we work in a sufficiently saturated model $\mathcal U\models\DCF_0$ with field of constants $\C$.

\bigskip
\section{Generic transitivity and degree of nonminimality}

\noindent
Recall from~\cite{BC2008} that a definable group action $(G,S)$ of finite Morley rank is \emph{generically transitive} if the action of $G$ on $S$ admits an orbit $\mc O$ such that
$$\RM(S\setminus \mc O)<\RM(S).$$
Assuming that $\operatorname{dM}(S)=1$, this is equivalent to asking that any two generic elements of $S$ over $A$ are conjugate.

Actually, we are interested in the multiple-transitivity version:

\begin{definition}
A finite rank definable group action $(G,S)$ is {\em generically $k$-transitive} if the diagonal action $(G,S^k)$ is generically transitive.
\end{definition}

Generic $k$-transitivity, and the Borovik-Cherlin conjecture about such group actions (articulated in~\cite{BC2008}), played an important role in recent work~\cite{nmdeg} on bounding the size of a witness to the nonminimality of a type in $\DCF_0$.
Recall:

\begin{definition}
The {\em degree of nonminimality} of $p\in S(A)$ is the least $d$ such that there are distinct realisations $a_1,\dots,a_d$ of $p$ and a nonalgebraic forking extension of $p$ to $Aa_1\cdots a_d$.
\end{definition}

The main result of~\cite{nmdeg} was that $\nmdeg(p)\leq U(p)+1$.
One step in that proof was the following observation, which we repeat here for the sake of completeness.

\begin{proposition}
\label{gendtran}
Suppose $p$ is a stationary type with $U(p)>1$ that is $\C$-internal and weakly $\C$-orthogonal.
Let $d:=\nmdeg(p)$.
Then the binding group of $p$ acts transitively and generically $d$-transitively on $p(\U)$.
\end{proposition}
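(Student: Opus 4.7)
The plan is to handle the two assertions separately: transitivity by a direct appeal to weak $\C$-orthogonality, and generic $d$-transitivity by contradiction using the minimality of $\nmdeg(p)=d$.

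Transitivity is immediate: since $p$ is stationary and weakly $\C$-orthogonal, every realisation of $p$ has the same (unique) type over $A\C$, so any two are conjugate under $\aut(\U/A\C)$, and this action on $p(\U)$ is by definition the binding group.

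For generic $d$-transitivity, the key identification is this: for any $d$-tuple $\bar a = (a_1,\dots,a_d)$ of realisations of $p$, the $G$-orbit of $\bar a$ equals the realisation set of $\tp(\bar a/A\C)$, and so has Morley rank $\RM(\bar a/A\C)$. For $(a_1,\ldots,a_d)$ independent over $A$, this rank is at most $d\cdot\RM(p) = \RM(\bar a/A)$, with equality precisely when $\bar a \ind_A \C$. Thus generic $d$-transitivity is equivalent to the existence of an independent $d$-tuple of realisations of $p$ that is also independent from $\C$ over $A$.

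Suppose for contradiction no such tuple exists. Take an independent tuple $(a_1,\dots,a_d) \models p^{d}$ and let $e\leq d$ be minimal with $(a_1,\dots,a_e)\nind_A \C$; weak $\C$-orthogonality of $p$ forces $e\geq 2$. Then $a_e \nind_{Aa_1\dots a_{e-1}} \C$, so there is $\bar c$ from $\C$ with $\tp(a_e/Aa_1\dots a_{e-1}\bar c)$ a nonalgebraic forking extension of the nonforking extension $p|_{Aa_1\dots a_{e-1}}$. Now use $\C$-internality of $p$ to ``absorb'' $\bar c$ into realisations of $p$: there is a finite tuple $\bar b$ from $p(\U)$ so that $\tp(a_e/A\bar b\,a_1\dots a_{e-1})$ is already a nonalgebraic forking extension of $p$. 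The remaining technical point -- and the principal obstacle -- is to arrange the joint choice of $(a_1,\ldots,a_d)$ and $\bar b$ so that the total number of distinct realisations used is at most $d-1$, contradicting $\nmdeg(p)=d$. This accounting exploits the freedom in the generic choice of $\bar a$ together with the fundamental-tuple structure afforded by $\C$-internality, and is the heart of the argument in~\cite{nmdeg}.
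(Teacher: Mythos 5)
The transitivity and the reduction to showing that $p^{(d)}$ is weakly $\C$-orthogonal are fine, and this is essentially the paper's framing of the problem. But you explicitly flag the crucial step as unresolved, and the gap is real. Trying to ``absorb'' the constant tuple $\bar c$ into realisations of $p$ via $\C$-internality goes in the wrong direction: a fundamental tuple for internality has no a priori bound on its length, so there is no reason the total count of realisations $a_1,\dots,a_{e-1},\bar b$ should come out to at most $d-1$. Nothing in the assumptions gives you that control, and I don't see how the ``freedom in the generic choice'' could supply it. You also silently assert that the forking extension of $p$ obtained by conditioning on $\bar c$ is \emph{nonalgebraic}; this needs an argument and is not automatic.

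The paper sidesteps both problems at once with a different device. Letting $r>1$ be least such that $p^{(r)}$ is not weakly $\C$-orthogonal and $\bar a\models p^{(r-1)}$, one obtains a partial $A\bar a$-definable map $f$ from $p(\U)$ to $\C$ with infinite image. The key point is to \emph{not} condition on the constant $c=f(a)$ witnessing non-orthogonality, but rather to pick a rational $c_0\in\operatorname{Im}(f)\cap\mathbb Q\subseteq\dcl(\emptyset)$ and take $a_0$ generic in the (infinite, since $U(p)>1$) fibre $f^{-1}(c_0)$ over $A\bar a$. Since $c_0$ adds no parameters, $\tp(a_0/A\bar a)$ is a forking extension of $p$ over exactly $r-1$ realisations, nonalgebraic because the fibre is infinite, and of $U$-rank $<U(p)$ because $f(a_0)\in\acl(\emptyset)$ while $f(a)\notin\acl(A\bar a)$. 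This yields $d\leq r-1$, hence $d<r$ and $p^{(d)}$ is weakly $\C$-orthogonal by minimality of $r$. No absorption and no delicate counting of realisations is needed.
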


\begin{proof}
Denote the binding group by $G:=\aut_A(p/\C)$.
It acts definably and faithfully on $S:=p(\U)$.
Weak orthogonality implies that the action is transitive and $p$ is isolated.
So $(G,S)$ is a definable homogeneous space.
To show that the action is generically $d$-transitive is precisely to show that $G$ acts transitively on $p^{(d)}$.
As $G$ is also the binding group of the $\C$-internal type $p^{(d)}$, it suffices to show that $p^{(d)}$ is weakly orthogonal to $\C$.

Let $r>1$ be least such that $p^{(r)}$ is not weakly orthogonal to $\C$.
(This exists by $\C$-internality.)
Fix $\overline a=(a_1,\dots,a_{r-1})\models p^{(r-1)}$ and let $q$ be the nonforking extension of $p$ to $A\overline a$.
Then $q$ is not weakly orthogonal to $\C$.
It follows (using elimimination of imaginaries for the structure induced on $\C$; namely that of a pure algebraically closed field) that there is $a\models q$ and a constant $c\in\C$ such that $c\in\dcl(A\overline a a)\setminus\acl(A\overline a)$.
Write $c=f(a)$ where $f$ is a partial $A\overline a$-definable function from $S$ to $\C$.
The fibre $f^{-1}(c)$ is infinite as $U(q)>1$, and so we may assume that all the fibres are infinite.
Let $c_0\in\operatorname{Im}(f)\cap\mathbb Q$, which exists as $\operatorname{Im}(f)$ is an infinite and hence cofinite subset of $\C$.
Let $a_0$ be generic in $f^{-1}(c_0)$ over $A\overline a$.
Then $U(a_0/A\overline a)>0$ as $f^{-1}(c_0)$ is infinite.
On the other hand, $U(a_0/A\overline a)<U(p)$, as otherwise $a_0\models q$, which is impossible as $f(a)\notin\acl(A\overline a)$ while $f(a_0)\in\mathbb Q=\dcl(\emptyset)$.
Hence $\tp(a_0/A\overline a)$ is a nonalgebraic forking extension of $p$ witnessing that $d\leq r-1$.
By minimal choice of $r$, it follows that $p^{(d)}$ is weakly $\C$-orthogonal, as desired.
\end{proof}

\bigskip
\section{Distinct solutions being independent}

\noindent
We are interested in the following condition on a stationary type $p\in S(A)$.

\begin{definition}[The condition $D_m$]
Given $m>1$, let us say that {\em $p\in S(A)$ satisfies~$D_m$} if every $m$-tuple of distinct realisations of $p$ is independent over $A$.
\end{definition}

We say that $p$ is {\em completely disintegrated} if $D_m$ holds for all $m>1$.

We begin with a few straightforward consequences of $D_2$.

\begin{lemma}
Every type satisfying $D_2$ is of finite rank.
\end{lemma}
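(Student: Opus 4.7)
The plan is to argue by contradiction: suppose $U(p)$ is infinite, and produce two distinct but $A$-dependent realisations of $p$, violating $D_2$.

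Fix any realisation $a\models p$. The heart of the argument is to exhibit a type $q\in S(Aa)$ extending $p$ with $0<U(q)<U(p)$, i.e., a nonalgebraic forking extension of $p$ over $Aa$. Once such a $q$ is in hand, any realisation $b\models q$ satisfies $b\models p$ (because $q$ restricts to $p$ over $A$), $b\notin\acl(Aa)$ (because $U(q)>0$, in particular $b\neq a$), and $b\nind_A a$ (because $q$ forks over $A$); the pair $(a,b)$ then contradicts $D_2$, so $U(p)$ must be finite.

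The substantive step is thus the construction of $q$, and for this I would use total transcendence of $\DCF_0$ together with the infinite-rank hypothesis. Concretely, infinite $U$-rank endows $a$ with a differentially-transcendental coordinate over $A$, along which one can impose a finite-order differential-algebraic relation on a hypothetical $b$ that strictly drops $U(b/Aa)$ below $U(p)$ but keeps it positive, while leaving $b$ inside $p(\U)$. In the cleanest case, when $a$ itself is differentially transcendental over $A$, one may take $b=a+c$ for $c\in\C$ generic over $Aa$; then $\delta b=\delta a$ so $U(b/Aa)=1$, and $b$ still realises $p$.

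The only real obstacle is performing the analogous rank-dropping construction uniformly for types of arbitrary infinite $U$-rank (rather than just the generic differentially-transcendental type); once this is handled, the conclusion is immediate from the definitions.
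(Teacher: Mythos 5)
Your plan has a genuine gap, which you yourself flag at the end: you do not handle the general case, only the ``cleanest case'' of a single $\delta$-transcendental coordinate. The paper's proof closes exactly this gap, and does so by a route that avoids the elaborate construction you are setting up.

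Two points are worth making. First, your framework is slightly over-engineered: you insist on a \emph{nonalgebraic} forking extension $q$ over $Aa$ so as to get $b\notin\acl(Aa)$. But $D_2$ only requires $b\neq a$ and $b\nind_A a$; there is no need for $b$ to avoid $\acl(Aa)$. Dropping that requirement opens up much cheaper witnesses, such as $b=\delta a$ (in the one-variable case), which lies in $\dcl(Aa)$ yet is distinct from $a$ and dependent on it. Second, and more importantly, the reduction to the one-variable case is the missing step. The paper observes that if $\tp(a/A)$ has infinite $U$-rank then some coordinate $a_i$ of $a$ must be $\delta$-transcendental over $A$, then notes that $\delta a_i$ is again $\delta$-transcendental over $A$, distinct from $a_i$, and $a_i\nind_A\delta a_i$. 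By homogeneity one extends $\delta a_i$ to a full realisation $a'\models p$; since $a'_i=\delta a_i\neq a_i$ and $a'_i\nind_A a_i$ forces $a'\nind_A a$, the pair $(a,a')$ violates $D_2$. This reduction to a single coordinate, rather than a ``uniform rank-dropping construction'' on the entire tuple, is precisely the idea you are missing, and it is what makes the proof short. As written, your proposal establishes the special case and names the general difficulty without resolving it, so it is not a complete proof.
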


\begin{proof}
As some co-ordinate of any realisation of an infinite rank type must be $\delta$-transcendental, it suffices to observe that the $1$-type of a $\delta$-transcendental element does not satisfy $D_2$.
Indeed, if $a$ is $\delta$-transcendental over a differential field $k$ then so is $\delta(a)$, which is distinct for $a$, and $a\nind_k\delta a$.
\end{proof}

\begin{lemma}
\label{notfinitecover}
No type satisfying $D_2$ can be a nontrivial finite cover.
That is, if $p=\tp(a/A)$ is a nonalgebraic stationary type satisfying $D_2$, and $a$ is interalgebraic with $b$ over $A$, then $a\in\dcl(Ab)$.
\end{lemma}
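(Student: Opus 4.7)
The idea is to argue by contradiction: assume $a\notin\dcl(Ab)$ and manufacture two distinct realisations of $p$ that fail to be independent over $A$, contradicting $D_2$.

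First I would invoke the standard fact that $a\in\dcl(Ab)$ is equivalent to $a$ being the unique realisation of $\tp(a/Ab)$. So if $a\notin\dcl(Ab)$, there exists $a'\neq a$ with $\tp(a'/Ab)=\tp(a/Ab)$. In particular $a'\models p$, and since $a\in\acl(Ab)$ (by interalgebraicity) and conjugates over $Ab$ stay in $\acl(Ab)$, we also have $a'\in\acl(Ab)$. Using $b\in\acl(Aa)$, this gives $a'\in\acl(Aa)$.

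Now apply $D_2$ to the two distinct realisations $a,a'$ of $p$: they are independent over $A$. Combined with $a'\in\acl(Aa)$, independence forces $a'\in\acl(A)$. But then $\tp(a'/A)=p$ has an algebraic realisation, so $p$ itself is algebraic, contradicting the hypothesis.

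There is really no obstacle here; the only thing to be slightly careful about is the initial reduction that failure of $a\in\dcl(Ab)$ produces a genuinely distinct conjugate $a'$ of $a$ over $Ab$, which is routine since $\tp(a/Ab)$ is isolated among its realisations by a formula (indeed $a\in\acl(Ab)$ means the realisation set is finite, and uniqueness of the realisation is exactly the definable closure condition). Everything else is a direct application of the definitions of $D_2$, interalgebraicity, and nonalgebraicity of $p$.
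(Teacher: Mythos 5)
Your proof is correct and follows essentially the same route as the paper: produce a distinct conjugate $a'$ of $a$ over $Ab$, note it must be independent from $a$ over $A$ by $D_2$, and derive a contradiction with nonalgebraicity from the algebraicity relation. The only cosmetic difference is that you conclude $a'\in\acl(A)$ where the paper concludes $a\in\acl(A)$; both contradict the hypothesis identically.
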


\begin{proof}
If $a\notin\dcl(Ab)$ then we can find $a'\neq a$ such that $\tp(a'/Ab)=\tp(a/Ab)$.
By $D_2$ we must have $a'\ind_Aa$.
But $a\in\acl(Ab)=\acl(Aa')$, which forces $a\in\acl(A)$ and contradicts nonalgebraicity of $p$.
\end{proof}

\begin{lemma}
\label{noweakfibrations}
If $p=\tp(a/A)$ is a nonalgebraic stationary type satisfying $D_2$ then it satisfies the following form of exchange: if $b\in\acl(Aa)\setminus\acl(A)$ then $a\in\dcl(Ab)$.
\end{lemma}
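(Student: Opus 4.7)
The plan is a two-step argument. First I aim to show $a \in \acl(Ab)$; once that is in hand, the hypothesis $b \in \acl(Aa)$ makes $a$ and $b$ interalgebraic over $A$, and Lemma~\ref{notfinitecover} immediately promotes this to $a \in \dcl(Ab)$. So the heart of the matter is establishing algebraicity.

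For this, I would argue by contradiction using the $U$-rank, which is meaningful since $p$ has finite rank by the first lemma of this section. From $b \in \acl(Aa) \setminus \acl(A)$ I first note that $b \nind_A a$: if $b \ind_A a$ held, then $U(b/Aa) = U(b/A)$, which would force $U(b/A) = 0$ since $b$ is algebraic over $Aa$, contradicting $b \notin \acl(A)$. Symmetry of independence then gives $a \nind_A b$, and hence $U(a/Ab) < U(a/A) = U(p)$, i.e., $U(a/Ab) \leq U(p) - 1$.

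Now suppose for contradiction that $a \notin \acl(Ab)$. Then $\tp(a/Ab)$ is nonalgebraic, so I can pick a realisation $a' \neq a$. Since $a' \models p$ and $a \neq a'$, the condition $D_2$ yields $a \ind_A a'$, and hence $U(a/Aa') = U(p)$. An $Ab$-automorphism sending $a$ to $a'$ fixes $b$ and sends $\acl(Aa)$ onto $\acl(Aa')$, so $b \in \acl(Aa')$, whence $U(a/Aa'b) = U(a/Aa') = U(p)$. But monotonicity gives $U(a/Aa'b) \leq U(a/Ab) \leq U(p) - 1$, a contradiction. The only point requiring a little care is the translation of the algebraicity of $b$ from $Aa$ to $Aa'$, which is immediate once one notes that $a$ and $a'$ are $Ab$-conjugate.
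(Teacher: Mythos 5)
Your proof is correct and uses essentially the same approach as the paper: reduce to showing $a\in\acl(Ab)$ via Lemma~\ref{notfinitecover}, then pick a second realisation $a'\neq a$ of $\tp(a/Ab)$, and use $b\in\acl(Aa)\cap\acl(Aa')$ together with $U$-rank computations to contradict $D_2$. The paper chooses $a'$ to be nonforking over $Aa$ and derives $a\nind_A a'$ directly, while you invoke $D_2$ first and then exhibit a rank inequality contradiction; these are merely two presentations of the same argument.
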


\begin{proof}
Note that the conclusion is a strengthening of the property called {\em admitting no proper fibrations} in~\cite{moosa2014some}.
In any case, by Lemma~\ref{notfinitecover}, it suffices to show that $a\in\acl(Ab)$.
Suppose $a\notin\acl(Ab)$, and seek a contradiction.
It follows that if $a'$ realises the nonforking extension of $\tp(a/Ab)$ to $Aa$ then $a\neq a'$.
On the other hand, using the Lascar inequality, $U(a/Aa')=U(a/Ab)<U(a/A)$ as $b\in\acl(Aa)\setminus\acl(A)$.
So $a$ and $a'$ are dependent over $A$, contradicting $D_2$.
\end{proof}

\begin{proposition}
\label{c2internal}
Every stationary type of $U$-rank $>1$ satisfying $D_2$ is internal and weakly orthogonal to the constants.
\end{proposition}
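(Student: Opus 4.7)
The plan is to prove the two parts of the conclusion by different methods, using Lemma~\ref{noweakfibrations} as the principal tool.

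Weak $\C$-orthogonality falls out quickly. Suppose, for contradiction, that some $a\models p$ satisfies $a\nind_A\C$. Since the induced structure on $\C$ is that of a pure algebraically closed field, the field $\acl(Aa)\cap\C$ strictly contains $\acl(A)\cap\C$; pick a single constant $c\in\acl(Aa)\cap\C$ with $c\notin\acl(A)$, so that $U(c/A)=1$. Applying Lemma~\ref{noweakfibrations} to $c$ gives $a\in\dcl(Ac)$, whence $U(a/A)\le U(c/A)=1$, contradicting $U(p)>1$.

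For $\C$-internality we argue by contradiction, in two steps. First, we plan to rule out $p\perp\C$. If $p\perp\C$, then by the Zilber dichotomy in $\DCF_0$ every minimal type nonorthogonal to $p$ is locally modular, and such a minimal type $q$ must exist because $p$ is nonalgebraic of finite rank (by the finite-rank lemma earlier in this section). Moving to a base $B\supseteq A$ where the non-orthogonality becomes weak, we obtain $a\models p|_B$ and $b\models q|_B$ with $b\in\acl(Ba)\setminus\acl(B)$; if Lemma~\ref{noweakfibrations} can be applied at the level of $B$, it yields $a\in\dcl(Bb)$ and hence $U(p)\le U(q)\le 1$, a contradiction. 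Second, once $p\not\perp\C$ is secured, a parallel use of exchange --- this time with $q$ a $\C$-internal minimal type nonorthogonal to $p$ --- should give $a\in\dcl(Bb)\subseteq\dcl(B\C)$ for a suitable $B$, showing that $p|_B$ is $\C$-internal.

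The main obstacle is precisely the base-change step flagged twice above: the $D_2$ hypothesis needed to invoke Lemma~\ref{noweakfibrations} is not automatically preserved from $\tp(a/A)$ to $\tp(a/B)$. The plan to cope with this is to take $B$ to be (an imaginary parameter coded from) a Morley sequence of realisations of $p$ over $A$ --- a sequence whose pairwise independence is precisely the content of $D_2$ --- and to verify that the no-proper-fibrations conclusion of Lemma~\ref{noweakfibrations} survives in this controlled extension. In the locally modular sub-case, an alternative route is to exploit the abelian group structure underlying locally modular minimal types in $\DCF_0$ to produce, already over $A$, a configuration of several distinct realisations of $p$ that directly violates $D_2$.
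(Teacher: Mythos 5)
Your argument for weak $\C$-orthogonality is correct and, modulo the order in which the two conclusions are derived, is essentially the paper's: extract a single constant $c\in\acl(Aa)\cap\C$ outside $\acl(A)$ via elimination of imaginaries in the pure field $\C$, apply Lemma~\ref{noweakfibrations} to get $a\in\dcl(Ac)$, and contradict $U(p)>1$.

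The internality half, however, has the gap you yourself flag, and it is a genuine one. Lemma~\ref{noweakfibrations} is a statement about $p=\tp(a/A)$ over the base $A$, and its hypothesis $D_2$ is likewise a property of $p$ over $A$. Your plan passes to a larger base $B\supseteq A$ to witness weak nonorthogonality to a minimal type $q$, and then wants to invoke the exchange property for $\tp(a/B)$. But $D_2$ over $A$ does not transfer to $D_2$ over $B$: two distinct realisations of $p|_B$ are independent over $A$, and each is independent from $B$ over $A$, yet this does not force independence over $B$; even arranging $B$ to be part of a Morley sequence in $p$ does not cover all pairs of realisations of $p|_B$. So the key step ``$b\in\acl(Ba)\setminus\acl(B)\Rightarrow a\in\dcl(Bb)$'' is unjustified, and the sketch of a workaround (Morley sequence base, or the abelian group structure behind locally modular types) is not carried out. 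The paper avoids base change altogether by invoking two results that deliver the needed data already over $A$: Proposition~2.3 of~\cite{moosa2014some}, which says that a type with no proper fibrations nonorthogonal to a locally modular minimal type is interalgebraic over $A$ with a power $q^{(k)}$ of a locally modular minimal $q\in S(A)$ (ruled out by $U(p)>1$ when $k=1$, and by Lemma~\ref{noweakfibrations} when $k>1$); and~\cite[Corollary~7.4.6]{gst}, which, once $p\not\perp\C$, produces $c\in\dcl(Aa)\setminus\acl(A)$ with $\tp(c/A)$ already $\C$-internal, so that Lemma~\ref{noweakfibrations} over $A$ gives $a\in\dcl(Ac)$ and hence $\C$-internality of $p$. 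To repair your proof you would either need to prove that the no-proper-fibrations property descends along nonforking base extension, or replace the base-extension step with these (or equivalent) results formulated over~$A$.
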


\begin{proof}
Suppose $p=\tp(a/A)$ is stationary, nonminimal, and satisfies $D_2$.
As $p$ is of finite rank it is nonorthogonal to some minimal type $r \in S(B)$ for some $B\supseteq A$.
We first show that $r$ is not locally modular.
Indeed, in~\cite[Proposition~2.3]{moosa2014some}, it is shown that, because $p$ has no proper fibrations (Lemma~\ref{noweakfibrations}), if $r$ is locally modular than in fact $p$ is interalgebraic with some $q^{(k)}$ where $q\in S(A)$ is a locally modular minimal type and $k\geq 1$.
Now, $k=1$ is impossible by the assumption that $U(p)>1$.
But $k>1$ is also impossible: taking $(b_1,\dots,b_k)\models q^{(k)}$, we obtain
$$b_1\in\acl(Ab_1\cdots b_k)\setminus\acl(A)=\acl(Aa)\setminus\acl(A)$$
while
$$\acl(Aa)=\acl(Ab_1\cdots b_k)\not\subseteq\acl(Ab_1)$$
contradicting Lemma~\ref{noweakfibrations}.

We have shown that $p$ is nonorthogonal to a non locally modular minimal type.
In $\DCF_0$ this means that $p$ is nonorthogonal to the constants $\C$.
It follows by~\cite[Corollary~7.4.6]{gst} that there is $c\in\dcl(Aa)\setminus\acl(A)$ such that $q:=\tp(c/A)$ is $\C$-internal.
By Lemma~\ref{noweakfibrations}, $a\in\dcl(Ac)$.
That is, $p$ and $q$ are interdefinable, and hence $p$ is also $\C$-internal.

If $p$ were not weakly orthogonal to $\C$ then there would be $c\in\C$ such that $c\in\dcl(Aa)\setminus\acl(A)$.
Lemma~\ref{noweakfibrations} would then imply that $a\in\dcl(Ac)$, contradicting nonminimality of $p$ (and also $D_2$).
\end{proof}

Here is our main theorem.

\begin{theorem}
\label{c3min}
Suppose $p$ is a stationary nonalgebraic type satisfying~$D_3$.
Then $p$ is minimal.
Moreover, there are in this case exactly two possibilities:
\begin{itemize}
\item[(i)]
Either $p$ is completely disintegrated, or,
\item[(ii)]
$p$ is nonorthogonal to the constants and $D_4$ fails.
\end{itemize}
\end{theorem}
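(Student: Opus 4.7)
The plan is to use Propositions~\ref{c2internal} and~\ref{gendtran} together with Knop's theorem~\cite{knop1983mehrfach} on generically $3$-transitive algebraic group actions to force $p$ to be minimal, and then to analyse separately the trivial and non-trivial cases for a minimal $p$ satisfying $D_3$. The first observation is that $D_3$ immediately entails $D_2$, since any pair of distinct realisations extends to a triple of distinct realisations (using nonalgebraicity of $p$). Suppose for contradiction that $U(p)>1$; then Proposition~\ref{c2internal} gives that $p$ is $\C$-internal and weakly $\C$-orthogonal. I would next check directly that $\nmdeg(p)\geq 3$: any nonalgebraic forking extension of $p$ to $Aa_1a_2$ for distinct realisations $a_1,a_2$ would be realised by some $a_3 \notin \acl(Aa_1a_2)$, hence distinct from both, and $D_3$ would force $a_3\ind_A a_1a_2$, contradicting forking. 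Proposition~\ref{gendtran} then says the binding group $G$ acts generically $3$-transitively on $p(\U)$, which, via $\C$-internality, is in definable bijection with a $\C$-constructible set of Morley dimension $U(p)>1$; Knop's theorem classifies the only generically $3$-transitive algebraic group action as $\PGL_2$ on $\mathbb{P}^1$, whose underlying variety has dimension $1$. This contradicts $U(p)>1$, so $p$ is minimal.

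For the dichotomy, suppose first that $p$ is trivial: $D_2$ gives pairwise independence of any $m$ distinct realisations, and triviality of the pregeometry upgrades this to joint independence, yielding $D_m$ for all $m>1$. Suppose instead that $p$ is non-trivial. I would first rule out that $p$ is locally modular: if $p$ were locally modular and non-trivial, Lemma~\ref{noweakfibrations} together with \cite[Proposition~2.3]{moosa2014some} would identify $p$ with the generic type of a connected modular group $H$ (possibly after a benign parameter extension), and then for two independent realisations $a_1, a_2$ the element $a_3 := a_1 \cdot a_2 \in H$ would give a third distinct realisation of $p$ with $a_3 \in \acl(Aa_1a_2)$, verified by a routine rank computation used to descend the group-theoretic dependence from the extended base back to $A$, contradicting $D_3$. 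The Zilber dichotomy in $\DCF_0$ then forces $p$ non-orthogonal to $\C$. For the failure of $D_4$, the proof of Proposition~\ref{c2internal} adapts to the minimal setting using Lemma~\ref{noweakfibrations} in place of non-minimality (a new constant in $\dcl(Aa)$ would, by exchange, make $p$ interdefinable with a type in $\C$, which cannot satisfy $D_2$), giving that $p$ is $\C$-internal and weakly $\C$-orthogonal; the induction in the proof of Proposition~\ref{gendtran} at step $r = 3$, together with $D_3$, then shows that $p^{(3)}$ is weakly $\C$-orthogonal, i.e.\ the binding group acts generically $3$-transitively on $p(\U)$. Knop again identifies the action with $\PGL_2$ on $\mathbb{P}^1$, and the classical cross-ratio provides a $\PGL_2$-invariant definable function on distinct $4$-tuples into $\C$. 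Fixing three distinct realisations $a_1, a_2, a_3$ of $p$ and specialising the cross-ratio to any value in $\mathbb{Q}\setminus\{0,1\}$ then produces a fourth distinct realisation $a_4 \in \acl(Aa_1a_2a_3)$, witnessing the failure of $D_4$.

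The delicate step I anticipate as the main obstacle is the elimination of the locally modular non-trivial case: the modular group witnessing local modularity is a priori only defined over some extension of parameters, so arranging for the resulting group-theoretic dependence among three distinct realisations of $p$ to be visible over the original base $A$ requires careful use of the exchange property (Lemma~\ref{noweakfibrations}) and the descent provided by \cite{moosa2014some}. The adaptation of the arguments of Propositions~\ref{c2internal} and~\ref{gendtran} to the minimal setting in case (b) is more routine but still warrants care.
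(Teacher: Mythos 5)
Your overall strategy (reduce via Propositions~\ref{c2internal} and~\ref{gendtran} to a multiply-transitive binding group and invoke Knop; then run the Zilber trichotomy for the dichotomy) is the paper's strategy. There are two places where your write-up diverges, one of which is a genuine gap.

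The gap is your use of Knop. You write that ``Knop's theorem classifies the only generically $3$-transitive algebraic group action as $\PGL_2$ on $\mathbb{P}^1$,'' and you apply this directly to the generically $3$-transitive binding group action produced by Proposition~\ref{gendtran}. But Knop's Satz~2 classifies actually $2$-transitive algebraic group actions, not generically multiply transitive ones; the generic version is precisely the (then-open in general) Borovik--Cherlin territory discussed in Section~2. The paper bridges this by observing that $D_3$ does more than give $\nmdeg(p)\geq 3$: since $D_3$ says every triple of \emph{distinct} realisations realises $p^{(3)}$, generic $3$-transitivity upgrades to outright $3$-transitivity, and only then does Knop apply. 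This upgrade step is what you are missing; it is easy to supply, but as written your appeal to Knop is not justified. The same misattribution recurs in your case~(b) argument; there you also invoke ``the induction in the proof of Proposition~\ref{gendtran} at step $r=3$'' to conclude weak $\C$-orthogonality of $p^{(3)}$, but that proof relies on $U(p)>1$ (it uses that the fibres $f^{-1}(c)$ are infinite), which fails for minimal $p$. A related argument does work in the minimal case (now the fibres are finite and $D_3$ itself supplies the contradiction), but it is not literally the one in Proposition~\ref{gendtran}. The paper instead cites the classification of strongly minimal homogeneous spaces \cite[Fact~1.6.25]{gst} and reads off directly that the nonforking extension of $p$ to three realisations is not weakly $\C$-orthogonal; your cross-ratio picture is the right intuition, but the reference is cleaner.

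Your handling of the nontrivial locally modular case is different in kind from the paper's and, as you anticipate, carries parameter-descent difficulties that you do not resolve. The paper avoids them by using the $\DCF_0$-specific form of the trichotomy due to Hrushovski--Sokolovic: in the nontrivial modular case, $p$ is nonorthogonal to the generic type of a Manin kernel $\mathcal G$ of a simple abelian variety defined over $\acl(A)$. Passing to the base $Aa$ for a single realisation $a\models p$, the nonforking extension $q$ of $p$ and the generic type $r$ of $\mathcal G$ become interalgebraic, and since $q$ inherits $D_2$, Lemma~\ref{noweakfibrations} makes $r$ a finite cover $\pi$ of $q$. Finite torsion of $\mathcal G$ then produces distinct $\pi(g)\neq\pi(ng)$ that are interalgebraic over $Aa$, so $\{a,\pi(g),\pi(ng)\}$ violates $D_3$. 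This sidesteps entirely the need to identify $p$ with a group generic ``possibly after a benign parameter extension'' and then descend; your addition-of-realisations idea is morally similar but would require exactly the extra work you flag, and the torsion argument is what makes it precise.
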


\begin{proof}
We assume $p\in S(A)$ is not minimal and seek a contradiction.
By Proposition~\ref{c2internal} we have that $p$ is $\C$-internal and weakly $\C$-orthogonal.
Denote the binding group by $G:=\aut_A(p/\C)$ and set $S:=p(\U)$.
Proposition~\ref{gendtran} tells us that $(G,S)$ is a generically $d$-transitive definable homogeneous space, where~$d$ is the degree of nonminimality of $p$.

Let $a_1,\dots,a_d$ be distinct realisations of $p$ and $q$ a nonalgebraic forking extension of $p$ to $Aa_1\cdots a_d$.
Let $a_{d+1}\models q$.
Then $a_1,\dots,a_{d+1}$ witnesses the failure of $D_{d+1}$.
Since $D_3$ holds, we must have $d\geq 3$.
In particular, $G$ acts generically $3$-transitively on $S$.
But $D_3$ tells us that every tuple of three distinct elements is generic.
Hence $(G,S)$ is outright $3$-transitive.

As $(G,S)$ is the binding group action of a $\C$-internal type, it is definably isomorphic (over possibly additional parameters) with a definable homogeneous space in the induced structure on $\C$.
That induced structure being a pure algebraically closed field, and using the Weil group-chunk theorem, we have that $(G,S)$ is definably isomorphic to an algebraic group action $(\overline G,\overline S)$ in the constants.
So $(\overline G,\overline S)$ is $3$-transitive.
But the only $3$-transitive algebraic group action is $\PGL_2$ acting naturally on $\PP$.
Indeed, Knop~\cite[Satz~2]{knop1983mehrfach} classifies the (faithful) $2$-transitive actions of algebraic groups as being of only two kinds:
\begin{itemize} 
\item The standard action of $\PGL_{n+1}$ on $\m P^n.$ 
\item Certain subgroups of the affine group of transformations on a vector space. 
\end{itemize}
In both cases, the action preserves colinearity of three points, and hence the only possibility for $3$-transitivity is if we are in the first case and $n=1$.
So $(\overline G,\overline S)$ is isomorphic to $(\PGL_2(\C),\PP(\C))$.
This contradicts nonminimality of $p$.

We have proven that $p$ is minimal.
The ``moreover" clause now follows by applying the Zilber trichotomy as it is manifested in $\DCF_0$.
That trichotomy says that exactly one of the following must hold:
\begin{itemize}
\item[(I)]
$p$ is geometrically trivial;
\item[(II)]
$p$ is nonorthogonal to the generic type of the Manin kernel $\mathcal G$ of a simple abelian variety over $\acl(A)$ that does not descend to the constants; or,
\item[(III)]
$p$ is nonorthogonal to the constants.
\end{itemize}
This is originally an unpublished theorem of Hrushovski and Sokolovic~\cite{HrSo}.
Published references and explanations can be found in various places, see for example the discussion around Fact~4.1 of~\cite{isolated}.

We first point out that $D_3$ rules out case~(II).
Indeed, assuming we are in case~(II), and
setting $q$ to be the nonforking extension of $p$ to $Aa$, it would follow that $q$ is interalgebraic with the generic type $r$ of $\mathcal G$ over $Aa$.
This is because both~$q$ and~$r$ would be modular minimal types, and for such types weak orthogonality implies orthogonality (see~\cite[Corollary~2.5.5]{gst}).
As $p$ satisfies $D_3$, $q$ satisfies $D_2$, and hence Lemma~\ref{noweakfibrations} implies that $r$ is a finite cover of~$q$, say $\pi:r\to q$.
Given $g\models r$, since $\mathcal G$ has finite $n$-torsion, $ng$ also realises $r$ for all $n$, and hence we can find $n>1$ such that $\pi(g)\neq\pi(ng)$.
But these two elements are distinct realisations of $q$ that are dependent over $Aa$.
It follows that $\{a,\pi(g),\pi(ng)\}$ witnesses the failure of $p$ to satisfy~$D_3$.

We must, therefore, be in case~(I) or~(III); either $p$ is geometrically trivial or it is nonorthogonal to the constants.
In the trivial case, $D_2$ already implies $D_m$ for all $m$, and so $p$ is completely disintegrated.
So it remains to consider the case when $p$ is nonorthogonal to the constants, and show that $D_4$ fails.

Indeed, from nonorthogonality to the constants, and using $D_2$, exactly as in the proof of Proposition~\ref{c2internal}, we conclude that $p$ is actually $\C$-internal and either interdefinable with the generic type of the constants or weakly $\C$-orthogonal.
In the former case $D_2$ would clearly fail, so $p$ is weakly $\C$-orthogonal.
It follows that $S:=p(\U)$ is a strongly minimal homogeneous space for the binding group action.
These are completely classified (see, for example,~\cite[Fact~1.6.25]{gst}), and it follows from that classification that if $\overline a=(a_1,a_2,a_3)\models p^{(3)}$ then the nonforking extension of $p$ to $A\overline a$ is not weakly orthogonal to $\C$.
This gives rise to an $A\overline a$-definable function $f:X\to\C$ where $X\subseteq S\setminus\{a_1,a_2,a_3\}$ is cofinite, 
$f(X)$ is cofinite, and the fibres of~$f$ are finite.
Let $c\in f(X)\cap\mathbb Q$ and choose any $a_4\in f^{-1}(c)$.
Then $a_4\in\acl(A\overline a)$, and hence $(a_1,a_2,a_3,a_4)$ witnesses the failure of $D_4$.
\end{proof}

The following example shows that Theorem~\ref{c3min} is sharp in the sense that $D_2$ would not suffice for minimality.

\begin{example}
\label{exa}
In~\cite[$\S4.2$]{nmdeg} the first and third authors gave an example, for each $n\geq 3$, of a $\C$-internal type $p$ of $U$-rank $n-1$ whose binding group action is that of $\PGL_n$ on $\mathbb P^{n-1}$. In particular, the binding group action is $2$-transitive, and hence $D_2$ holds for these nonminimal types.
The construction is based on one from~\cite{jin-moosa} when $n=2$.
Essentially, $p$ is the generic type of the projectivisation of the space of solutions to $\delta X=BX$ where $B$ is an $n\times n$ matrix whose entries form a differentially transcendental set, and $X$ is an $n$-column of variables.
In fact, using the solution to the inverse differenital Galois problem for connected linear algebraic groups in the non-autonomous case~\cite{mitschi-singer}, such examples will exist over arbitrary finitely generated differential fields as long as the transcendence degree over the constants is positive.
\end{example}

Let us point out that both cases of the ``moreover" clause of Theorem~\ref{c3min} actually occur.
That there are minimal completely disintegrated types is well known, with constructions appearing in~\cite{HrIt, mcgrail2000search, notmin}.
The following shows that case~(ii) also occurs: there are minimal types nonorthogonal to the constants satisfying $D_3$.

\begin{example}
\label{exb}
Consider the order~$1$ Riccati equation $\delta y=y^2+c$ with $c\in\mathbb C(t)$ a transcendental rational function.
The generic type $p$ of this equation is minimal and $\C$-internal.
Nagloo~\cite{nagloo2020algebraic} has shown that it does satisfy~$D_3$.
(Another argument is by~\cite[$\S4.3$]{nmdeg} where it is shown that the binding group acts $3$-transitively on $p$.)
\end{example}

\medskip
\subsection{The autonomous case}
When the type in Theorem~\ref{c3min} is over the constants then the theorem is not sharp.
This is because the inverse differential Galois problem fails in the autonomous case.
Indeed, we get a stronger result:

\begin{theorem}
\label{c2min}
Suppose $A\subset\C$ and $p\in S(A)$ is a stationary nonalgebraic type satisfying~$D_2$.
Then $p$ is minimal and completely disintegrated.
\end{theorem}

\begin{proof}
We first argue that $p$ must be orthogonal to the constants.
Assume, toward a contradiction, that $p$ is nonorthogonal to $\C$.
Then, by the proof of Proposition~\ref{c2internal}, $D_2$ forces $p$ to be $\C$-internal and weakly $\C$-orthogonal.
Moreover, exactly as in the proof of Theorem~\ref{c3min}, the binding group $G:=\aut_A(p/\C)$ will act outright $2$-transitively on the set $S:=p(\U)$.

The first thing we observe is that $2$-transitivity implies $G$ is centerless.
Indeed, this is an elementary argument about group actions, and is in fact the first paragraph of the proof of Knop's Satz~2 from~\cite{knop1983mehrfach} referred to above.

We deduce a contradiction by showing that for types over constants the binding group can never be centerless, unless the binding group itself is trivial.
This uses some differential Galois theory.
Let $k=\acl(A)\subset\C$, and denote also by $p$ the unique extension to $k$.
We can find a fundamental system of solutions  $a_1,\ldots, a_n \models p$ witnessing $\C$-internality of $p$ in the prime model $M$ over $k$.
This means that there exists a $k$-definable function $\phi(\overline{x}, \overline{y})$ such that every realization of $p$ is of the form $\phi(a_1,\ldots, a_n,  \overline{c})$ for some $\overline{c} \in \mathcal{C}$.
The type $q = \tp(a_1,\ldots, a_n/k)$ is also internal to the constants and weakly orthogonal to the constants and $a_1,\ldots, a_n$ generate a strongly normal extension $K := k \langle a_1,\ldots, a_n\rangle$ of $k$ in the sense of Kolchin (see \cite[$\S$3.2]{Sanchez-Pillay}).
Moreover, we have  a natural identification 
$$\aut_k(q/\mathcal C) \simeq \aut_k(p/\mathcal C) = G$$
given for $\sigma \in\aut_k(q/\mathcal C)$ by $$\sigma(\phi(a_1,\ldots, a_n,  \overline{c})) = \phi(\sigma(a_1),\ldots, \sigma(a_n), \overline{c})$$
It follows that $G$ is definably isomorphic to the set of $\C$-points of the differential Galois group $\Gal(K/k)$ of the strongly normal extension $K \mid k$.
Since $G$ has a trivial center, so does the algebraic group $\Gal(K/k)$.
By \cite[Theorem 13] {Ros}, centerless algebraic groups are linear, and hence $\Gal(K/k)$ is a linear algebraic group and $K \mid k$ is a Picard-Vessiot extension.
We claim that $\Gal(K/k)$ is commutative (this was already observed in the end of the proof of Proposition 4.9 of \cite{JJP}): indeed assume that $K$ is generated over $k$ by a fundamental system of solutions of 
$\displaystyle Y' = AY$.
After changing the basis,  we can assume that $A$ is given in Jordan's normal form.  For a Jordan block $J(\lambda,m)$ with eigenvalue $\lambda$ and size $m$,  a fundamental system of solutions is given by:
$$y_1(t) = \begin{pmatrix} e^{\lambda t} \\ 0 \\ \vdots \\  0  \end{pmatrix}, y_2(t) = \begin{pmatrix} e^{\lambda t} \\ t e^{\lambda t} \\ 0 \\ \vdots \\ 0   \end{pmatrix}, \ldots, y_r(t) =  \begin{pmatrix}e^{\lambda t} \\ te^{\lambda t} \\ \vdots \\ \frac{t^i} {i!} e^{\lambda t} \\ \vdots \\ \frac {t^r}{r!} e^{\lambda t} \end{pmatrix}.$$
Hence the entries of a fundamental system of solutions of $Y' = AY$ belong to the differential field $L = k(t, e^{\lambda_1 t}, \ldots,  e^{\lambda_s t} )$ where $\lambda_1,\ldots, \lambda_s$ are the eigenvalues of $A$. It follows that $K \mid k$ is a subextension of $ L\mid k$ and by Galois correspondence that:
$$\Gal(K/k) \simeq\Gal(L/k)/\Gal(L/K).$$
This is commutative because $\Gal(L/k) \simeq \mathbb{G}_a \times(\mathbb{G}_m)^l$ is commutative.
But that forces $\Gal(K/k)$ to be trivial, contradicting the fact that $G$ is not trivial (as for example it acts transitively on the set of realisations of the nonalgebraic type $p$).

We have thus proved that $p$ is orthogonal to the constants.
By Proposition~\ref{c2internal}, it is therefore also minimal.
Now, every minimal type over the constants and orthogonal to the constants is geometrically trivial, and hence $D_2$ implies completely disintegrated, as desired.\end{proof}

\medskip
\subsection{Deducing Theorem~\ref{A}}
Let us spell out how Theorem~\ref{c3min} implies Theorem~\ref{A} of the Introduction.
This is a standard translation from model-theoretic to differential-algebraic language.

We are working over a differential field $(k,\delta)$ of characteristic zero, with an order~$n$ algebraic differential equation
$$P(y,\delta y,\dots,\delta^{(n)}y)=0$$
where $P\in k[X_0,\dots,X_n]$ is irreducible.
We may assume the equation has nonalgebraic solutions, or there is nothing to prove.
Denote by $Y\subseteq\U$ the Kolchin closed set defined by this equation.
Note that $(C_1)$ already implies that $Y$ has no infinite Kolchin closed subsets over $k$ of order less than~$n$.
Together with irreducibility of~$P$, this implies that $Y$ is Kolchin irreducible and that all points of $Y\setminus k^{\alg}$ realise the Kolchin generic type $p\in S(k)$.
This is a nonalgebraic stationary type.
Moreover, for each $m>1$, the condition $(C_m)$ on the equation translates precisely into condition $D_m$ on $p$.

If the equation satisfies $(C_3)$ then $p$ is minimal by Theorem~\ref{c3min}.
If, in addition, $n>1$, then $p$ is orthogonal to the constants, so that we are in case~(i) of Theorem~\ref{c3min}, and $p$ is completely disintegrated. It follows that $(C_m)$ holds of the equation, for all~$m$.

On the other hand, if $\delta$ is trivial on $k$, and the equation satisfies $(C_2)$, then Theorem~\ref{c2min} implies that $p$ is minimal and totally disintegrated.
Again we conclude $(C_m)$ holds of the equation, for all~$m$.
\qed

\bigskip
\section{A formulation for $D$-varieties}
\label{sectdvar}

\noindent
There is a well studied correspondence between types in $\DCF_0$ and so called ``$D$-varieties", and it is worth reformulating Theorem~\ref{c3min} in these terms.
The idea goes back to Buium~\cite{Buium1}, who considered algebraic varieties over a differential field $(k,\delta)$ equipped with an extension of the derivation $\delta$ to the structure sheaf of the variety.
Later, an equivalent formulation, paralleling more closely the language of algebraic vector fields, came into use.
We point the reader to~\cite[$\S2$]{kowalski2006quantifier} for a more detailed survey with references.
We restrict ourselves here to a very concise explanation.

Fix a differential field $(k,\delta)$ of characteristic zero.
If $V\subseteq \mathbb A^n$ is an irreducible affine algebraic variety over $k$, then the {\em prolongation} of $V$ is the algebraic variety $\tau V\subseteq\mathbb A^{2n}$ over~$k$ whose defining equations are
\begin{eqnarray*}
P(x_1,\dots,x_n)&=&0\\
P^\delta(x_1,\dots,x_n)+\sum_{i=1}^n\frac{\partial P}{\partial x_i}(x_1,\dots,x_n)\cdot y_i&=&0
\end{eqnarray*}
for each $P$ vanishing on $V$.
Here $P^\delta$ denotes the polynomial obtained by applying $\delta$ to the coefficients of $P$.
The projection onto the first $n$ coordinates gives us a surjective morphism $\pi:\tau V\to V$.

\begin{definition}
\label{defn-dvar}
A {\em $D$-variety over $k$} is a pair $(V,s)$ where $V$ is an irreducible algebraic variety over $k$ and $s:V\to\tau V$ is a regular section to the prolongation  defined over $k$.
A {\em $D$-subvariety} of $(V,s)$ is then a $D$-variety $(W,t)$ where $W$ is a closed subvariety of $V$ over $k$ and $t=s|_W$.
\end{definition}

Note that if $\delta=0$ on $k$, then the above equations for $\tau V$ reduce to the familiar equations for the tangent bundle $TV$.
A $D$-variety in that case is nothing other than an algebraic vector field, and a $D$-subvariety is an invariant subvariety of the vector field.
In general, $\tau V$ will be a torsor for $TV$; for each $a\in V$ the fibre $\tau_a V$ is an affine translate of the tangent space $T_aV$.

Associated to a $D$-variety $(V,s)$ over $k$ is a certain complete type over $k$ in $\DCF_0$, namely the {\em generic type of $(V,s)$ over $k$}.
This type, $p(x)$, is axiomatised by asserting that $x$ is Zariski-generic in $V$ over $k$, and that the following system of order $1$ algebraic differential equations holds: $s(x)=(x,\delta(x))$.
In fact, up to interdefinability, all finite rank types in $\DCF_0$ arise in this way; as the generic types of $D$-varieties.
Results about finite rank types in $\DCF_0$, such as Theorem~\ref{c3min} above, can therefore be translated into algebro-geometric statements about $D$-varieties.
For example, the condition we called $D_m$ in the previous section, namely that of having every distinct $m$-tuple of realisations being independent, when applied to the generic type of $(V,s)$ translates to the absence of any proper $D$-subvarieties of $(V^m,s^m)$ projecting dominantly onto each co-ordinate, other than the diagonals.
In particular, we obtain

\begin{corollary}
\label{dvarcor}
Suppose $(V,s)$ is a $D$-variety over $(k,\delta)$.
\begin{itemize}
\item[(a)]
If $\dim V>1$ and the third cartesian power $(V^3,s^3)$ admits no proper $D$-subvarieties over $k$ projecting dominantly onto each co-ordinate, other than the diagonals, then the same holds of $(V^m,s^m)$ for all $m$.
\item[(b)]
If $\delta$ is trivial on $k$ and $(V^2,s^2)$ admits no proper $D$-subvarieties over $k$ projecting dominantly onto each co-ordinate, other than the diagonal, then, for all $m$, $(V^m,s^m)$ admits no proper $D$-subvarieties over $k$ projecting dominantly onto each co-ordinate other than the diagonals.
\end{itemize}
\end{corollary}

\begin{proof}
Let $p\in S(k)$ be the generic type of $(V,s)$ over $k$.
The assumption on $(V^3,s^3)$ in part~(a) tells us that $p$ satisfies $D_3$.
Hence, by Theorem~\ref{c3min}, $p$ is minimal.
Now, the fact that $\dim V>1$ rules out the possibility of $p$ being nonorthogonal to the constants.
So $p$ is completely disintegrated.
This implies the desired conclusion about $D$-subvarieties of $(V^m,s^m)$ over $k$.
Similarly, applying Theorem~\ref{c2min} to $p$ yields part~(b).
\end{proof}

\begin{remark}
In the non-autonomous case, if $\dim V=1$, then one has to make the assumption on $D$-subvarieties of $(V^4,s^4)$ in order to get $D_4$ on $p$ and thus conclude from Theorem~\ref{c3min} that $p$ is completely disintegrated.
\end{remark}

Note that Corollary~\ref{dvarcor}(b) is Theorem~\ref{B} of the Introduction.

\bigskip
\section{The case of compact complex manifolds}

\noindent
As mentioned in the introduction, the main part of Theorem~\ref{c3min} holds also in $\ccm$, the theory of compact complex manifolds.
This translates into the following statement in bimeromorphic geometry that may be of independent interest:

\begin{theorem}
\label{ccm3min}
Suppose $f:X\to Y$ is a fibre space of compact K\"ahler manifolds\footnote{More generally our results apply to  compact complex anlaytic varieties in Fujiki's class $\mathscr C$. In fact, all that is needed is that they be {\em essentially saturated} in the sense of~\cite{saturated}.}.
Suppose that the three-fold fibred product $X\times_YX\times_YX$ contains no proper complex analytic subvarieties that projects onto each co-ordinate, other than the diagonals.
Then the `general' fibre of $f$ is simple.
\end{theorem}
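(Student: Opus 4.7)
The plan is to translate the statement into the model theory of $\ccm$, invoke the analogue of Theorem~\ref{c3min} in that setting, and translate back. Work in a sufficiently saturated model of the theory of compact complex varieties in Fujiki's class~$\mathscr C$, all of which are essentially saturated. Let $\eta$ be a generic point of $Y$, let $a$ be generic in the fibre $X_\eta$ over $\eta$, and set $p:=\tp(a/\eta)$. The desired conclusion, that the general fibre of $f$ is simple (no positive-dimensional proper analytic subvarieties), is precisely the assertion that $p$ is minimal.

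The key translation is to check that the hypothesis on $X\times_YX\times_YX$ is equivalent to $p$ satisfying~$D_3$. A triple of distinct realisations $(a_1,a_2,a_3)$ of $p$ is a triple of pairwise distinct points of $X_\eta$, each generic in the fibre. A dependence among them over $\eta$ is witnessed by some proper closed analytic subvariety $Z\subseteq X\times_YX\times_YX$ containing $(a_1,a_2,a_3)$; genericity of each coordinate forces $Z$ to project onto each factor, and pairwise distinctness of the $a_i$ excludes the diagonals. The converse direction is immediate. So the hypothesis is precisely $D_3$ for $p$.

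With $D_3$ in hand, it remains to invoke Theorem~\ref{c3min} inside $\ccm$. As already noted in the paper, its proof uses only total transcendence together with the strong form of the Zilber dichotomy asserting that every non locally modular minimal type over $A$ is nonorthogonal to an $A$-definable pure algebraically closed field. In $\ccm$ both features are available for types coming from essentially saturated sorts, the role of the constants being played by the sort of the projective line~$\mathbb P^1$. The remaining ingredients, namely Knop's classification of $3$-transitive algebraic group actions, the binding-group machinery, and the Weil group-chunk theorem, are either field-theoretic or entirely general. Hence $p$ is minimal, which is the desired conclusion.

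The main obstacle is the model-theoretic verification that the essential saturation enjoyed by class~$\mathscr C$ really does substitute for total transcendence throughout the proof of Theorem~\ref{c3min}, and that the Zilber dichotomy applies to the specific type $p$ arising from an arbitrary K\"ahler fibre space. Once this is granted, as the footnote to the theorem indicates, the proof of Theorem~\ref{c3min} transcribes essentially verbatim. Note that, in contrast to the deduction of Theorem~\ref{A}, there is no need here to rule out nonorthogonality to the field sort: minimality of $p$ is exactly what the statement asks for.
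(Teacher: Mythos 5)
Your proposal follows the paper's own sketch: pass to a saturated model of $\ccm$, take $p$ the generic type of the generic fibre, observe that the hypothesis on $X\times_YX\times_YX$ is exactly $D_3$, and deduce minimality of $p$ by running the proof of Theorem~\ref{c3min} with the projective line playing the role of the constants. Two remarks in your write-up misstate the picture, though. First, ``simple'' does not mean ``no positive-dimensional proper analytic subvarieties''; following Fujiki it means not being \emph{covered} by an analytic family of such subvarieties (a nonalgebraic K3 surface is simple yet may contain finitely many curves). Second, and more seriously, essential saturation is not what ``substitutes for total transcendence'': $\ccm$ is totally transcendental outright and satisfies the strong Zilber dichotomy, so the proof of Theorem~\ref{c3min} transfers to $\ccm$ with no additional hypotheses needed. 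The role of essential saturation of the class~$\mathscr C$ sorts (and hence the reason for the K\"ahler restriction) is of a different nature: the argument necessarily lives in a nonstandard elementary extension $\mathcal U\succ\mathcal A$, where the generic point $\eta$ of $Y$ and the generic fibre $X_\eta$ exist, and what it directly yields is simplicity of that \emph{nonstandard} fibre; to descend to simplicity of the `general' standard fibre one needs the standard model $\mathcal A$ to be saturated in some countable sublanguage, which is precisely what essential saturation supplies. Absent this, you would have established a statement about a nonstandard fibre with no visible consequence in $\mathcal A$. With these two corrections the argument is sound and matches the paper.
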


Let us define the terminology used here.
That $f$ is a {\em fibre space} means that off a proper complex analytic subset of~$Y$ the fibres are irreducible.
A property holds of the {\em `general'} fibre of $f$ if it holds of all fibres off a countable union of proper complex analytic subsets of $Y$.
Finally, we mean {\em simple} in the sense of~\cite{fujiki83}; namely, not covered by an analytic family of proper positive-dimensional analytic subsets.

We will not give a proper proof of this theorem, as it is really the same as (part of) Theorem~\ref{c3min}.
But let us give some explanations.
First of all, we work in $\ccm$ which is the theory of the multisorted structure $\mathcal A$ where there is a sort for each compact complex analytic variety and a predicate for every complex analytic subset of a cartesian product of sorts.
The role of the constants is played by the projective line which is a sort on which the induced structure is bi-interpretable with that of a pure algebraically closed field.
The truth of the Zilber dichotomy in $\ccm$ tells us that every non locally modular minimal type is nonorthogonal to the projective line.
We suggest~\cite{moosapillay-survey} for an introduction to, and survey of, the subject.

Theorem~\ref{ccm3min} is proved by considering the generic type~$p$ of the generic fibre of~$f$, observing that the assumption on the three-fold fibred product expresses precisely that $p$ satisfies $D_3$, and then deducing, exactly as in the proof of Theorem~\ref{c3min}, that~$p$ is minimal.
But for a compact complex analytic variety to be simple is equivalent to its generic type in $\ccm$ being minimal.

But there is a subtlety: the structure $\mathcal A$ is not saturated and so ``generic" in the above paragraph has to be understood, {\em a priori}, in the sense of a nonstandard elementary extension of $\mathcal A$.
See~\cite[$\S2$]{ret} for details about the Zariski geometry in nonstandard models.
The reason for restricting to the K\"ahler case is then precisely so that we can, following~\cite{saturated}, find a suitable countable language in which the standard model is saturated.
We can thus conclude simplicity of the `general' fibre in $\mathcal A$ from simplicity of the generic fibre in an elementary extension.

Note that we only concluded minimality, and did not state an analogue of the ``moreover" clause in Theorem~\ref{c3min}.
The reason for this is that there remains some ambiguity about the characterisation of nontrivial locally modular types in $\ccm$.
It is known that they are nonorthogonal to simple complex tori, but it seems as yet unclear whether one has the same control over parameters that one does in $\DCF_0$.
This is related to Question~5.1 of~\cite{isolated}. 

\bigskip


\end{document}